\newtheorem{thm}{Theorem}
\newtheorem{cor}[thm]{Corollary}
\newtheorem{lem}[thm]{Lemma}
\newtheorem{prop}[thm]{Proposition}
\newtheorem{fact}[thm]{Fact}
\newtheorem{claim}[thm]{Claim}
\newtheorem{defn}[thm]{Definition}
\theoremstyle{definition}
\newtheorem{examp}{Example}
\newcommand{\rr}{\mathbb{R}}
\newcommand{\nn}{\mathbb{N}}
\newcommand{\ee}{\varepsilon}
\newcommand{\con}{\smallfrown}
\newcommand{\kk}{\mathcal{K}}
\newcommand{\ff}{\mathcal{F}}
\newcommand{\SB}{\mathbf{\Sigma}}
\newcommand{\PB}{\mathbf{\Pi}}
\newcommand{\bt}{\mathbb{N}^{<\mathbb{N}}}
\newcommand{\ct}{2^{<\mathbb{N}}}
\newcommand{\tr}{\mathrm{Tr}}
\newcommand{\wf}{\mathrm{WF}}
\newcommand{\sg}{\sigma}
\newcommand{\bb}{\mathcal{B}}
\newcommand{\SRC}{\mathrm{SRC}}
\newcommand{\bs}{\mathbf{f}=(f_n)}
\newcommand{\bsg}{\mathbf{g}=(g_n)}
\newcommand{\bff}{\mathbf{f}}
\newcommand{\bgg}{\mathbf{g}}
\newcommand{\ltr}{\Lambda^{<\nn}}
\newcommand{\FIN}{\mathrm{FIN}}
\begin{document}

\title[A strong boundedness result]{A strong boundedness result for
separable Rosenthal compacta}
\author{Pandelis Dodos}
\address{Universit\'{e} Pierre et Marie Curie - Paris 6, Equipe d' Analyse
Fonctionnelle, Bo\^{i}te 186, 4 place Jussieu, 75252 Paris Cedex 05, France.}
\email{pdodos@math.ntua.gr}

\footnotetext[1]{2000 \textit{Mathematics Subject Classification}:
03E15, 26A21, 54H05.}
\footnotetext[2]{\textit{Key words}: separable Rosenthal compacta,
strongly bounded classes.}

\maketitle

%------------------------Abstract-------------------------------%

\begin{abstract}
It is proved that the class of separable Rosenthal compacta on the
Cantor set having a uniformly bounded dense sequence of continuous
functions, is strongly bounded.
\end{abstract}

%---------------------------------------------------------------%
%                       Introduction                            %
%---------------------------------------------------------------%

\section{Introduction}

Our main result is a strong boundedness result for the class of
separable Rosenthal compacta (that is, separable compact subsets
of the first Baire class -- see \cite{ADK} and \cite{Ro2}) on the
Cantor set having a uniformly bounded dense sequence of continuous
functions. We shall denote this class by $\SRC$. The phenomenon of
strong boundedness, which was first touched by A. S. Kechris and
W. H. Woodin in \cite{KW}, is a strengthening of the classical
property of boundedness of $\PB^1_1$-ranks. Abstractly,
one has a $\PB^1_1$ set $B$, a natural notion of embedding between
elements of $B$ and a canonical $\PB^1_1$-rank $\phi$ on $B$ which is
coherent with the embedding, in the sense that if $x, y\in B$ and
$x$ embeds into $y$, then $\phi(x)\leq \phi(y)$. The strong
boundedness of $B$ is the fact that for every analytic subset $A$ of $B$
there exists $y\in B$ such that $x$ embeds into $y$ for every $x\in A$.
Basic examples of strongly bounded classes are the well-orderings
$\mathrm{WO}$ and the well-founded trees $\mathrm{WF}$ (although, in
these cases strong boundedness is easily seen to be equivalent to
boundedness). Recently, it was shown (see \cite{AD} and
\cite{DF}) that several classes of separable Banach spaces
are strongly bounded, where the corresponding notion of embedding
is that of (linear) isomorphic embedding. These results have, in turn,
important consequences in the study of universality problems in Banach
space Theory.

We will add another example to the list of strongly bounded classes,
namely the class $\SRC$. We notice that every $\kk$ in $\SRC$ can be
naturally coded by its dense sequence of continuous functions.
Hence, we identify $\SRC$ with the set
\[ \big\{ (f_n)\in B(2^\nn)^\nn: \overline{\{f_n\}}^p\subseteq
\bb_1(2^\nn) \text{ and } f_n\neq f_m \text{ if } n\neq m \big\} \]
where $B(2^\nn)$ stands for the closed unit ball of the separable
Banach space $C(2^\nn)$. With this identification, the set
$\SRC$ is $\PB^1_1$-true. A canonical $\PB^1_1$-rank on $\SRC$ comes
from the work of H. P. Rosenthal. Specifically, for every $\bs$ in
$\SRC$ one is looking at the order of the $\ell_1$-tree of the
sequence $(f_n)$. One has also a natural notion of topological
embedding between elements of $\SRC$. In particular, if $\bs$ and $\bsg$
are in $\SRC$, then we say that $\bgg$ \textit{topologically embeds} into $\bff$,
if there exists a homeomorphic embedding of the compact $\overline{\{g_n\}}^p$
into $\overline{\{f_n\}}^p$. This topological embedding, however,
is rather weak and not coherent with the $\PB^1_1$-rank on $\SRC$. Thus,
we strengthen the notion of embedding by imposing extra metric conditions
on the relation between $\bgg$ and $\bff$. To motivate our definition,
assume that $\bgg=(g_n)$ and $\bff=(f_n)$ were in addition Schauder basic
sequences. In this case the most natural thing to consider is equivalence
of basic sequences, i.e. $\bgg$ embeds into $\bff$ if there exists $L=
\{l_0<l_1<...\}\in[\nn]$ such that $(g_n)$ is equivalent to
$(f_{l_n})$. In such a case, it is easily seen the order of
the $\ell_1$-tree of $\bgg$ is dominated by the
one of $\bff$.

Although not every sequence $\bff\in\SRC$ is Schauder basic, the following condition
incorporates the above observation. So, we say that $\bgg=(g_n)$ \textit{strongly
embeds} into $\bff=(f_n)$, if $\bgg$ topologically embeds into $\bff$ and, moreover,
for every $\ee>0$ there exists $L_\ee=\{l_0<l_1<...\}\in[\nn]$ such that for every
$k\in\nn$ and every $a_0,...,a_k\in \rr$ we have
\[ \Big| \max_{0\leq i\leq k} \big\| \sum_{n=0}^i a_n g_n\big\|_{\infty} -
\big\| \sum_{n=0}^k a_n f_{l_n}\big\|_{\infty} \Big| \leq \ee \sum_{n=0}^k
\frac{|a_n|}{2^{n+1}}.\]
The notion of strong embedding is coherent with the $\PB^1_1$-rank on
$\SRC$ and is consistent with our motivating observation, in the sense
that if $\bgg=(g_n)$ strongly embeds into $\bff=(f_n)$ and $(g_n)$ is
Schauder basic, then there exists $L=\{l_0<l_1<...\}\in [\nn]$ such that
$(f_{l_n})$ is Schauder basic and equivalent to $(g_n)$. Under the above
terminology, we prove the following.
\medskip

\noindent \textbf{Main Theorem.} \textit{Let $A$ be an analytic subset
of $\SRC$. Then there exists $\bff\in\SRC$ such that for every $\bgg\in A$
the sequence $\bgg$ strongly embeds into $\bff$.}

%---------------------------------------------------------------%
%                    Background material                        %
%---------------------------------------------------------------%

\section{Background material}

%-------------------------Notations-----------------------------%

We let $\nn=\{0,1,2,...\}$. By $[\nn]$ we denote the set of all infinite
subsets of $\nn$, while for every $L\in[\nn]$ by $[L]$ we denote the set
of all infinite subsets of $L$. For every Polish space $X$ by $\bb_1(X)$
we denote the set of all real-valued, Baire-1 functions on $X$.
If $\mathcal{F}$ is a subset of $\rr^X$, then by $\overline{\mathcal{F}}^p$
we shall denote the closure of $\mathcal{F}$ in $\rr^X$.

Our descriptive set theoretic notation and terminology
follows \cite{Kechris}. If $X, Y$ are Polish spaces, $A\subseteq X$ and
$B\subseteq Y$, then we say that $A$ is \textit{Wadge} (respectively
\textit{Borel}) \textit{reducible} to $B$ if there exists a continuous
(respectively Borel) map $f:X\to Y$ such that $f^{-1}(B)=A$. If $A$
is $\PB^1_1$, then a map $\phi:A\to\omega_1$ is said to be a
$\PB^1_1$-\textit{rank} on $A$ if there exist relations
$\leq_\Sigma, \leq_\Pi$ in $\SB^1_1$ and $\PB^1_1$ respectively
such that for all $y\in A$ we have
\[ x\in A \text{ and } \phi(x)\leq \phi(y) \Leftrightarrow
x\leq_\Sigma y \Leftrightarrow x\leq_\Pi y. \]
We notice that if $B$ is Borel reducible to a set $A$ via a Borel map $f$ and
$\phi$ is a $\PB^1_1$-rank on $A$, then the map $\psi:B\to\omega_1$ defined by
$\psi(y)=\phi\big(f(x)\big)$ for all $y\in B$ is a $\PB^1_1$-rank on $B$.

\subsection{Trees} Let $\Lambda$ be a non-empty set. By $\ltr$ we denote
the set of all finite sequences of $\Lambda$. We view $\ltr$ as a tree
equipped with the (strict) partial order $\sqsubset$ of end-extension. If
$t\in \ltr$, then the length $|t|$ of $t$ is defined to be the
cardinality of the set $\{s\in\ltr:s\sqsubset t\}$. If $s,t\in \ltr$, then
by $s^\con t$ we denote their concatenation. Two nodes $s,t\in\ltr$ are said to
be \textit{comparable} if either $s\sqsubseteq t$ or $t\sqsubseteq s$; otherwise
are said to be \textit{incomparable}. A subset of $\ltr$ consisting of
pairwise comparable nodes is said to be a \textit{chain}. If $\Lambda=\nn$
and $L\in[\nn]$, then by $\FIN(L)$ we denote the subset of $L^{<\nn}$
consisting of all finite \textit{strictly increasing} sequences in $L$.
For every $x\in \Lambda^\nn$ and every $n\geq 1$ we set
$x|n=\big( x(0),..., x(n-1)\big)\in \ltr$ while $x|0=\varnothing$.

A \textit{tree} $T$ on $\Lambda$ is a downwards closed subset of $\ltr$.
By $\tr(\Lambda)$ we denote the set of all trees on $\Lambda$. Hence
\[ T\in \tr(\Lambda) \Leftrightarrow \forall s,t\in\ltr \ (t\in T \
\wedge s\sqsubseteq t \Rightarrow s\in T). \]
A tree $T$ on $\Lambda$ is said to be \textit{pruned} if for every
$t\in T$ there exists $s\in T$ with $t\sqsubset s$. If $T\in\tr(\Lambda)$,
then the \textit{body} $[T]$ of $T$ is defined to be the set
$\{x\in\Lambda^\nn: x|n\in T \ \forall n\}$. A tree $T$ is said to
be \textit{well-founded} if $[T]=\varnothing$. The subset of
$\tr(\Lambda)$ consisting of all well-founded trees on $\Lambda$
will be denoted by $\wf(\Lambda)$. If $T\in\wf(\Lambda)$, we let
$T'=\{t:\exists s\in T \text{ with } t\sqsubset s\}\in\wf(\Lambda)$.
By transfinite recursion we define the iterated derivatives $T^{(\xi)}$
of $T$. The \textit{order} $o(T)$ of $T$ is defined to be the least
ordinal $\xi$ such that $T^{(\xi)}=\varnothing$. If $S, T$ are
well-founded trees, then a map $\phi:S\to T$ is called \textit{monotone}
if $s_1\sqsubset s_2$ in $S$ implies that $\phi(s_1)\sqsubset\phi(s_2)$
in $T$. Notice that in this case $o(S)\leq o(T)$. If $\Lambda, M$ are
non-empty sets, then we identify every tree $T$ on $\Lambda\times M$
with the set of all pairs $(s,t)\in\ltr\times M^{<\nn}$ such
that $|s|=|t|=k$ and $\big( (s(0),t(0)),....,(s(k-1),t(k-1))\big)\in T$.
If $\Lambda=\nn$, then we shall simply denote by $\tr$ and $\wf$ the
sets of all trees and well-founded trees on $\nn$ respectively. For
every countable set $\Lambda$ the set $\wf(\Lambda)$ is
$\PB^1_1$-complete and the map $T\to o(T)$ is a
$\PB^1_1$-rank on $\wf(\Lambda)$ (see \cite{Kechris}).

\subsection{Schauder basic sequences} A sequence $(x_n)$
of non-zero vectors in a Banach space $X$ is said to be a
\textit{Schauder basic sequence} if it is a Schauder basis
of its closed linear span (see \cite{LT}). This is equivalent
to say that there exists a constant $K\geq 1$ such that for
every $m,k\in\nn$ with $m< k$ and every $a_0,...,a_k\in\rr$ we have
\begin{equation}
\label{eSchauder} \big\| \sum_{n=0}^m a_n x_n \big\| \leq K \big\|
\sum_{n=0}^k a_n x_n \big\|.
\end{equation}
The least constant $K$ for which inequality (\ref{eSchauder})
holds is called the \textit{basis constant} of $(x_n)$.
A Schauder basic sequence $(x_n)$ is said to be \textit{monotone}
if $K=1$. It is said to be \textit{seminormalized} (respectively
\textit{normalized}) if there exists $M>0$ such that
$\frac{1}{M}\leq \|x_n\|\leq M$ (respectively $\|x_n\|=1$)
for every $n\in\nn$.

Let $X$ and $Y$ be Banach spaces. If $(x_n)$ and $(y_n)$ are
two sequences in $X$ and $Y$ respectively and $C\geq 1$, then
we say that $(x_n)$ is $C$-\textit{equivalent} to $(y_n)$ (or
simply equivalent, if $C$ is understood) if for every $k\in\nn$
and every $a_0,...,a_k\in\rr$ we have
\[ \frac{1}{C} \big\| \sum_{n=0}^k a_n y_n \big\|_Y
\leq \|\sum_{n=0}^k a_n x_n \big\|_X \leq
C \big\| \sum_{n=0}^k a_n y_n\big\|_Y. \]
We denote by $(x_n)\stackrel{C}{\sim}(y_n)$ the fact
that $(x_n)$ is $C$-equivalent to $(y_n)$.

%-----------------------------------------------------------------%
%                          Coding $\SRC$                          %
%-----------------------------------------------------------------%

\section{Coding $\SRC$}

Let $X$ be a compact metrizable space and let $\SRC(X)$ be the
family of all separable Rosenthal compacta on $X$ having a dense
set of  continuous functions which is uniformly bounded with
respect to  the supremum norm. We denote by $B(X)$ the closed unit
ball of the separable Banach space $C(X)$. Notice that every
$\kk\in \SRC(X)$ is naturally coded by its dense sequence of
continuous functions. Hence we may identify $\SRC(X)$ with the set
\[ \big\{ (f_n)\in B(X)^\nn: \overline{\{f_n\}}^p\subseteq
\bb_1(X) \text{ and } f_n\neq f_m \text{ if } n\neq m \big\}. \]
Let us denote by $\mathbf{B}(X)$ the $G_\delta$ subset of $B(X)^\nn$
consisting of all sequences $\bs$ in $B(X)^\nn$ such that
$f_n\neq f_m$ if $n\neq m$. With the above identification the set
$\SRC(X)$ becomes a subset of the Polish space $\mathbf{B}(X)$.
Moreover, as for every compact metrizable space $X$ the Banach
space $C(X)$ embeds isometrically into $C(2^\nn)$, we shall
denote by $\SRC$ the set $\SRC(2^\nn)$ and we view $\SRC$ as
the set of all separable Rosenthal compacta having a uniformly
bounded dense sequence of continuous functions and defined on
a compact metrizable space (it is crucial that $C(X)$ embeds
isometrically into $C(2^\nn)$ -- this will be clear later on).
The following lemma provides an estimate for the complexity
of the set $\SRC(X)$.
\begin{lem}
\label{srcl1} For every compact metrizable space $X$ the set
$\SRC(X)$ is $\PB^1_1$. Moreover, the set $\SRC$ is $\PB^1_1$-true.
\end{lem}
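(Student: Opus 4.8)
The plan is to reduce both assertions to the well-foundedness of an explicitly defined tree of independent finite families. For fixed rationals $r<s$ and a sequence $\bff=(f_n)\in\mathbf{B}(X)$, I declare a strictly increasing node $(n_0,\dots,n_k)\in\FIN(\nn)$ to belong to a tree $T_{r,s}(\bff)$ if the family $f_{n_0},\dots,f_{n_k}$ is $(r,s)$-\emph{independent}, i.e. for every partition $A\sqcup B=\{0,\dots,k\}$ the closed set $\bigcap_{i\in A}\{f_{n_i}\le r\}\cap\bigcap_{i\in B}\{f_{n_i}\ge s\}$ is nonempty. Discarding the last coordinate of an independent family leaves an independent family (extend a partition of the shorter index set by assigning the missing index to either side), so $T_{r,s}(\bff)$ is a genuine tree, and it is ill-founded exactly when $(f_n)$ admits an infinite $(r,s)$-independent subsequence. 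The characterization I would quote is that, by Rosenthal's $\ell_1$-theorem together with the Bourgain--Fremlin--Talagrand analysis of sequences of continuous functions (see \cite{Ro2}, \cite{ADK}), a uniformly bounded $(f_n)\subseteq C(X)$ satisfies $\overline{\{f_n\}}^p\subseteq\bb_1(X)$ if and only if $(f_n)$ has no independent subsequence, equivalently $T_{r,s}(\bff)\in\wf$ for all rationals $r<s$.

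For the upper bound I would argue as follows. The evaluation $(\bff,x)\mapsto f_n(x)$ is continuous on $\mathbf{B}(X)\times X$, since the norm topology on $C(X)$ dominates pointwise values; hence for a fixed node $(n_0,\dots,n_k)$ and a fixed partition $(A,B)$ the set $\{(\bff,x): f_{n_i}(x)\le r\ \forall i\in A,\ f_{n_i}(x)\ge s\ \forall i\in B\}$ is closed in $\mathbf{B}(X)\times X$. As $X$ is compact, the projection of a closed set along $X$ is closed, so $\{\bff:(n_0,\dots,n_k)\in T_{r,s}(\bff)\}$ is closed in $\mathbf{B}(X)$. Thus $\bff\mapsto T_{r,s}(\bff)\in\tr$ is Borel, and since $\wf$ is $\PB^1_1$, the condition ``$T_{r,s}(\bff)\in\wf$'' is $\PB^1_1$ as the preimage of $\wf$ under a Borel map. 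Intersecting over the countably many pairs $r<s$ in $\qq$ and with the Borel set $\mathbf{B}(X)$ exhibits $\SRC(X)=\{\bff\in\mathbf{B}(X):\forall r<s\in\qq,\ T_{r,s}(\bff)\in\wf\}$ as $\PB^1_1$.

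For the second assertion I would show that $\SRC=\SRC(2^\nn)$ is $\PB^1_1$-hard by Borel-reducing the $\PB^1_1$-complete set $\wf$ of well-founded trees on $\nn$ to it. Given a tree $R$ on $\nn$, I would construct, continuously in $R$, a uniformly bounded family of continuous functions indexed by $\bt$ on a compact metrizable carrier depending on $R$ (a closed subspace of $2^{\bt}$), and reindex it into a sequence $\bff_R\in B(2^\nn)$ via the isometric embedding $C(Y)\hookrightarrow C(2^\nn)$ stressed in the text, arranging that the independence tree of $\bff_R$ tracks $R$. Then an infinite branch of $R$ produces an infinite independent subsequence, hence a non-Baire-$1$ cluster point and $\bff_R\notin\SRC$, while well-foundedness of $R$ forces every $T_{r,s}(\bff_R)$ to be well-founded, giving $\bff_R\in\SRC$; so $R\in\wf\Leftrightarrow\bff_R\in\SRC$. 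Since the coordinate projections on $2^{\bt}$ are already fully independent, the substance is to carve the carrier so that independence survives precisely along chains compatible with $R$.

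The upper bound is essentially routine once the independence-tree characterization is available. The real work lies in two places: justifying the nontrivial implication of that characterization (no independent subsequence $\Rightarrow$ every cluster point is Baire-$1$), which rests on the Bourgain--Fremlin--Talagrand sequential-compactness machinery, and engineering the reduction so that $R\mapsto\bff_R$ is continuous while the independence tree reflects $R$ faithfully. I expect this last point---making independence persist exactly along the branches of $R$ and nowhere else---to be the main obstacle.
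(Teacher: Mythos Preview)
Your upper-bound argument is correct and runs parallel to the paper's. The paper reduces $\SRC(X)$ to $\wf$ via the $\ell_1$-tree $T_\bff$ (finite increasing sequences on which $(f_{n_i})$ is $d$-equivalent to the $\ell_1$ basis, glued over $d\geq 1$), whereas you use the independence trees $T_{r,s}(\bff)$. By Rosenthal's dichotomy these are well-founded simultaneously, so either choice gives a Borel reduction to $\wf$; the two are just different parametrizations of the same obstruction, and your compactness argument for Borelness of $\bff\mapsto T_{r,s}(\bff)$ is clean.

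For $\PB^1_1$-hardness, however, you only outline a strategy and explicitly flag the construction of $\bff_R$ as ``the main obstacle'' without carrying it out; as written this is a gap. The paper sidesteps the reduction entirely: rather than building $R\mapsto\bff_R$, it shows that the canonical $\PB^1_1$-rank $\bff\mapsto o(T_\bff)$ is unbounded on $\SRC$ and invokes the general fact (\cite[Theorem 35.23]{Kechris}) that a $\PB^1_1$ set carrying an unbounded $\PB^1_1$-rank is $\PB^1_1$-complete. Unboundedness is witnessed by compact hereditary families $\ff\subseteq 2^\nn$ of arbitrarily high order: the coordinate projections $\pi_n^\ff(F)=\chi_F(n)$ lie in $\SRC(\ff)$ (they converge pointwise to $0$), yet one checks directly that $T_\ff$ embeds monotonically into $T^2_{(\pi_n^\ff)}$, so the rank is at least $o(\ff)$. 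This is essentially the construction you were groping toward---coordinate projections on a closed subset of $2^\nn$ with independence surviving exactly along the finite sets in $\ff$---but deployed as a one-shot rank estimate rather than a uniform reduction, which spares you the continuity of $R\mapsto\bff_R$ and the issue of the carrier varying with $R$.
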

\begin{proof}
Instead of calculating the complexity of $\SRC(X)$ we will
actually find a Borel map $\Phi:\mathbf{B}(X)\to\tr$ such that
$\Phi^{-1}(\wf)=\SRC(X)$. In other words, we will find a Borel
reduction of $\SRC(X)$ to $\wf$. This will not only show that
$\SRC(X)$ is $\PB^1_1$, but also, it will provide a natural
$\PB^1_1$-rank on $\SRC(X)$. This canonical reduction comes
from the work of H. P. Rosenthal.

Specifically, let $(e_i)$ be the standard basis of $\ell_1$.
For every $d\in\nn$ with $d\geq 1$ and every $\bs$ in
$\mathbf{B}(X)$ we associate a tree $T^d_{\bff}$
on $\nn$ defined by
\[ s\in T^d_{\bff} \Leftrightarrow s=(n_0<...<n_k)\in \FIN(\nn)
\text{ and } (e_i)_{i=0}^{k} \stackrel{d}{\sim} (f_{n_i})_{i=0}^k. \]
Notice that $(e_i)_{i=0}^k \stackrel{d}{\sim} (f_{n_i})_{i=0}^k$
if for every $a_0,..., a_k\in\rr$ we have
\[ \frac{1}{d} \sum_{i=0}^k |a_i| \leq \big\| \sum_{i=0}^k a_i f_{n_i}
\big\|_{\infty} \leq d \sum_{i=0}^k |a_i|. \]
Observe that for every $t\in\bt$ the set $\{\mathbf{f}: t\in T^d_{\bff}\}$
is a closed subset of $\mathbf{B}(X)$. This yields that
the map $\mathbf{B}(X)\ni \mathbf{f}\mapsto T^d_{\bff}\in\tr$
is Borel (actually it is Baire-1). Next we glue the sequence of
trees $\{ T^d_{\bff}:d\geq 1\}$ and we obtain a tree $T_{\bff}$
on $\nn$ defined by the rule
\[ s\in T_\bff \Leftrightarrow \exists d\geq 1 \ \exists s'
\text{ with } s=d^\con s' \text{ and } s'\in T^d_\bff. \]
The tree $T_\bff$ is usually called the $\ell_1$-tree of the
sequence $\bs$. Clearly the map $\Phi:\mathbf{B}(X)\to\tr$
defined by $\Phi(\bff)=T_\bff$ is Borel.

We observe that
\[ \bs\in \SRC(X) \Leftrightarrow T_\bff\in \wf.\]
This equivalence is essentially Rosenthal's Dichotomy \cite{Ro1}
(see also \cite{Kechris} and \cite{To2}). Indeed, let $\bs$ be
such that $T_\bff$ is well-founded. By Rosenthal's Dichotomy,
every subsequence of $(f_n)$ has a further pointwise convergent
subsequence. By the Main Theorem in \cite{Ro2}, the closure of
$\{f_n\}$ in $\rr^X$ is in $\bb_1(X)$, and so, $\bff\in\SRC(X)$.
Conversely assume that $T_\bff$ is ill-founded. There exists
$L=\{l_0<l_1<..\}\in[\nn]$ such that the sequence $(f_{l_n})$ is equivalent
to the standard basis of $\ell_1$. By the fact that $(f_n)$
is uniformly bounded and Lebesgue's dominated convergence theorem,
we get that the sequence $(f_{l_n})$ has no pointwise
convergent subsequence. This implies that the closure of
$\{f_n\}$ in $\rr^X$ contains a homeomorphic copy of $\beta\nn$,
and so, $\bff\notin\SRC(X)$. It follows that the map $\Phi$
determines a Borel reduction of $\SRC(X)$ to $\wf$. Hence the set
$\SRC(X)$ is $\PB^1_1$ and that the map $\phi_X:\SRC(X)\to\omega_1$
defined by $\phi_X(\bff)=o(T_\bff)$ is a $\PB^1_1$-rank on $\SRC(X)$.

We proceed to show that the set $\SRC$ is $\PB^1_1$-true.
Denote by $\phi$ the canonical $\PB^1_1$-rank $\phi_{2^\nn}$
on $\SRC$ defined above. In order to prove that $\SRC$ is
$\PB^1_1$-true, by \cite[Theorem 35.23]{Kechris}, it is
enough to show that $\sup\{\phi(\bff): \bff\in\SRC\}=\omega_1$.
In the argument below we shall use the following simple fact.
\begin{fact}
\label{srcf1} Let $X, Y$ be compact metrizable spaces and
$e:X\to Y$ a continuous onto map. Let $\bs\in\SRC(Y)$ and
define $\mathbf{g}=(g_n)\in C(X)^\nn$ by $g_n(x)=f_n(e(x))$
for every $x\in X$ and every $n\in\nn$. Then $\mathbf{g}\in\SRC(X)$
and $\phi_Y(\bff)=\phi_X(\mathbf{g})$.
\end{fact}
Now let $\mathcal{F}$ be a family of finite subsets of $\nn$
which is hereditary (i.e. if $F\in\mathcal{F}$ and $G\subseteq F$,
then $G\in\mathcal{F}$) and compact in the pointwise topology
(i.e. compact in $2^\nn$). To every such family $\ff$ one
associates its order $o(\ff)$, which is simply the order of the
downwards closed, well-founded tree $T_\ff$ on $\nn$ defined by
\[ s\in T_\ff \Leftrightarrow s=(n_0<...<n_k)\in \FIN(\nn)
\text{ and } \{n_0,...,n_{k}\}\in\ff. \]
Such families are well-studied in Combinatorics and Functional
Analysis and a detailed exposition can be found in \cite{AT}.
What we need is the simple fact that for every countable ordinal
$\xi$ one can find a compact hereditary family $\ff$ with
$o(\ff)\geq\xi$.

So, fix a countable ordinal $\xi$ and let $\ff$ be a compact
hereditary family with $o(\ff)\geq\xi$. We will additionally
assume that $\{n\}\in\ff$ for all $n\in\nn$. Define $\pi_n^\ff:
\ff\to\rr$ by $\pi_n^\ff(F)=\chi_F(n)$ for all $F\in\ff$.
Clearly for every $n\in\nn$ we have $\pi_n^\ff\in C(\ff)$ and
$\|\pi^\ff_n\|_{\infty}=1$. Moreover, as the family $\ff$ contains
all singletons, we get $\pi^\ff_n\neq \pi_m^\ff$ if $n\neq m$.
It is easy to see that the sequence $(\pi_n^\ff)$ converges
pointwise to 0, and so, $(\pi_n^\ff)\in \SRC(\ff)$.
\begin{claim}
\label{c-order} We have that $\phi_\ff\big((\pi_n^\ff)\big)\geq o(\ff)\geq\xi$.
\end{claim}
\noindent \textit{Proof of Claim \ref{c-order}.} The proof
is essentially based on the fact that $\ff$ is hereditary.
Indeed, notice that if $F=\{n_0<...<n_k\}\in\ff$, then
$(e_i)_{i=0}^k\stackrel{2}{\sim} (\pi^\ff_{n_i})_{i=0}^k$
or equivalently $F\in T^2_{(\pi_n^\ff)}$. To see this, fix
$F=\{n_0<...<n_k\}\in\ff$ and let $a_0,...,a_k\in\rr$
arbitrary. We set
\[ I_{+}=\big\{ i\in\{0,...,k\}:a_i\geq 0\big\} \text{ and }
I_{-}=\{0,...,k\}\setminus I_{+}.\]
Then, either $\sum_{i\in I_+} a_i\geq \frac{1}{2} \sum_{i=0}^k
|a_i|$ or $-\sum_{i\in I_-} a_i \geq \frac{1}{2} \sum_{i=0}^k |a_i|$.
Assume that the second case occurs (the argument is symmetric).
Let $F_-=\{n_i:i\in I_-\}\subseteq F\in\ff$.
Then $F_-\in\ff$ as $\ff$ is hereditary.
Now observe that
\[ \frac{1}{2} \sum_{i=0}^k |a_i| \leq -\sum_{i\in I_-} a_i =
\big| \sum_{i=0}^k a_i\pi_{n_i}^\ff(F_-) \big| \leq \big\|
\sum_{i=0}^k a_i\pi_{n_i}^\ff \big\|_{\infty}\leq 2\sum_{i=0}^k
|a_i|. \]
It follows by the above discussion that the identity map
$\mathrm{Id}:T_\ff\to T^2_{(\pi_n^\ff)}$ is a well-defined
monotone map. The claim is proved. \hfill $\lozenge$
\medskip

By Fact \ref{srcf1} and Claim \ref{c-order}, we conclude that
$\sup\{ \phi(\bff):\bff\in\SRC\}=\omega_1$ and the entire proof
is completed.
\end{proof}

%-----------------------------------------------------------------%
%               Topological and strong embedding                  %
%-----------------------------------------------------------------%

\section{Topological and strong embedding}

Consider the classes $\SRC(X)$ and $\SRC(Y)$, where $X$ and $Y$ are
compact metrizable spaces, as they were coded in the previous section.
There is a canonical notion of embedding between elements of $\SRC(X)$
and $\SRC(Y)$ defined as follows.
\begin{defn}
\label{srcd1} Let $X,Y$ be compact metrizable spaces, $\bs\in\SRC(X)$
and $\bsg\in\SRC(Y)$. We say that $\bgg$ topologically embeds into
$\bff$, in symbols $\bgg<\bff$, if there exists a homeomorphic
embedding of $\overline{\{g_n\}}^p$ into $\overline{\{f_n\}}^p$.
\end{defn}
Clearly the notion of topological embedding is natural and
meaningful, as $\bff_1<\bff_2$ and $\bff_2<\bff_3$ imply that
$\bff_1<\bff_3$. However, in this setting, one also has a canonical
$\PB^1_1$-rank on $\SRC$ and any notion of embedding between elements
of $\SRC$ should be coherent with this rank, in the sense that if
$\bgg<\bff$, then $\phi_Y(\bgg)\leq\phi_X(\bff)$. Unfortunately,
the topological embedding is not strong enough in order to have
this property.
\begin{examp}
\label{srcex1} Let $\ff_1$ and $\ff_2$ be two compact hereditary
families of finite subsets of $\nn$. As in the proof of Lemma
\ref{srcl1}, consider the sequences $(\pi^{\ff_1}_n)\in \SRC(\ff_1)$
and $(\pi^{\ff_2}_n)\in\SRC(\ff_2)$. Both of them are pointwise
convergent to 0. Hence, they are topologically equivalent and clearly
bi-embedable. However, it is easy to see that the corresponding
ranks of the two sequences depend only on the order of the families
$\ff_1$ and $\ff_2$, and so, they are totally unrelated.
\end{examp}
We are going to strengthen the notion of topological embedding
between the elements of $\SRC$. To motivate our definition,
let $\bs, \bsg\in\SRC$ and assume that both $(f_n)$ and $(g_n)$
are Schauder basic sequences. In this case, the most
natural notion of embedding is that of equivalence, i.e. $\bgg$
embeds into $\bff$ if there exists $L=\{l_0<l_1<...\}\in [\nn]$
such that the sequence $(g_n)$ is equivalent to $(f_{l_n})$.
It is easy to verify that, in this case, we do have that
$\phi(\bgg)\leq\phi(\bff)$. Although not every $\bff\in\SRC$ is
a Schauder basic sequence, there is a metric relation we can
impose on $\bff$ and $\bgg$ which incorporates the above
observation.
\begin{defn}
\label{srcd2} Let $X,Y$ be compact metrizable spaces, $\bs\in\SRC(X)$
and $\bsg\in\SRC(Y)$. We say that $\bgg$ strongly embeds into $\bff$,
in symbols $\bgg\prec\bff$, if $\bgg$ topologically embeds into $\bff$
and, moreover, if for every $\ee>0$ there exists $L_\ee=\{l_0<l_1<...\}
\in[\nn]$ such that for every $k\in\nn$ and every $a_0,..., a_k\in\rr$
we have
\begin{equation}
\label{srce1} \Big| \max_{0\leq i\leq k} \big\| \sum_{n=0}^i a_ng_n
\big\|_{\infty} - \big\| \sum_{n=0}^k a_n f_{l_n}\big\|_{\infty} \Big|
\leq \ee \sum_{n=0}^k \frac{|a_n|}{2^{n+1}}.
\end{equation}
\end{defn}
Below we gather the basic properties of the notion of
strong embedding.
\begin{prop}
\label{srcp1} Let $X$ and $Y$ be compact metrizable spaces. The following hold.
\begin{enumerate}
\item[(i)] If $\bff\in\SRC(X)$ and $\bgg\in\SRC(Y)$ with $\bgg\prec\bff$,
then $\bgg<\bff$.
\item[(ii)] If $\bff\in\SRC(X)$, $\bgg\in\SRC(Y)$ with $\bgg\prec\bff$
and the sequence $(g_n)$ is a normalized Schauder basic sequence,
then there exists $L=\{l_0<l_1<...\}\in[\nn]$ such that the sequence
$(f_{l_n})$ is Schauder basic and equivalent to $(g_n)$.
\item[(iii)] If $\bff_1\prec\bff_2$ and $\bff_2\prec\bff_3$, then
$\bff_1\prec\bff_3$.
\item[(iv)] If $\bff\in\SRC(X)$ and $\bgg\in\SRC(Y)$ with $\bgg\prec\bff$,
then $\phi_Y(\bgg)\leq\phi_X(\bff)$.
\item[(v)] Let $Z$ be a compact metrizable space and $e:Z\to X$ onto
continuous. Let $\bs\in\SRC(X)$ and define, as in Fact \ref{srcf1},
$\mathbf{h}=(h_n)\in\SRC(Z)$ by $h_n(z)=f_n(e(z))$ for every $n\in\nn$
and every $z\in Z$. If $\bgg\in\SRC(Y)$ is such that $\bgg\prec\bff$,
then $\bgg\prec\mathbf{h}$.
\end{enumerate}
\end{prop}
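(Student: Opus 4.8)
The plan is to verify each of the five clauses of Proposition~\ref{srcp1} in turn, reducing all but (v) to direct consequences of the definition of strong embedding, with the bulk of the work concentrated in (ii) and (iv). Clause (i) is immediate: strong embedding is \emph{defined} to include topological embedding as one of its two requirements, so $\bgg\prec\bff$ gives $\bgg<\bff$ by definition, with nothing to prove.

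For clause (iv), I would argue that the metric condition (\ref{srce1}) forces a monotone map between the associated $\ell_1$-trees, so that $o(T_\bgg)\leq o(T_\bff)$, hence $\phi_Y(\bgg)\leq\phi_X(\bff)$. The idea is that if $(g_n)$ is $d$-equivalent to $(e_i)$ on a node $(n_0<\dots<n_k)\in T^d_\bgg$, then the displayed inequality, applied along the subsequence $L_\ee$, shows that $(f_{l_{n_i}})$ is $(d+\delta)$-equivalent to $(e_i)$ for a $\delta$ controlled by $\ee$; choosing $\ee$ small (relative to the $2^{-(n+1)}$ weights, which are summable) lets me pass from a $d$-equivalence of $\bgg$ to a $d'$-equivalence of the corresponding subsequence of $\bff$. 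The one subtlety I would watch is that (\ref{srce1}) compares $\|\sum a_nf_{l_n}\|_\infty$ not to $\|\sum a_ng_n\|_\infty$ but to $\max_{i\leq k}\|\sum_{n=0}^i a_ng_n\|_\infty$; since the left norm $\|\sum_{n=0}^k a_ng_n\|_\infty$ is always dominated by this maximum, one direction of the equivalence transfers cleanly, and the lower bound on $\|\sum a_nf_{l_n}\|_\infty$ is exactly what is needed to keep $(f_{l_n})$ bounded below in $\ell_1$-norm. Since a monotone map of trees does not decrease order (as recorded in the Trees subsection), this yields the rank inequality.

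For clause (ii), assuming additionally that $(g_n)$ is normalized Schauder basic with basis constant $K$, I would show that a suitable $L=L_\ee$ with $\ee$ small makes $(f_{l_n})$ Schauder basic and equivalent to $(g_n)$. The key observation is that the appearance of $\max_{0\leq i\leq k}$ in (\ref{srce1}) is tailored precisely to the Schauder property: for a basic sequence the partial-sum norms are comparable to the full norm, so $\max_{i\leq k}\|\sum_{n=0}^i a_ng_n\|_\infty$ lies between $\|\sum_{n=0}^k a_ng_n\|_\infty$ and $K\|\sum_{n=0}^k a_ng_n\|_\infty$. Feeding this into (\ref{srce1}) and absorbing the error term $\ee\sum|a_n|2^{-(n+1)}$—which I can dominate by a small multiple of $\|\sum a_ng_n\|_\infty$ once $\ee$ is chosen small, using normalization and the basis constant—shows that $(f_{l_n})$ is $C$-equivalent to $(g_n)$ for some $C$ depending on $K$ and $\ee$. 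Equivalence to a Schauder basic sequence then forces $(f_{l_n})$ itself to be Schauder basic. This is the step I expect to require the most care, since the error term must be controlled uniformly in $k$ and in the coefficients, and the verification of the basis inequality (\ref{eSchauder}) for $(f_{l_n})$ has to be extracted from the two-sided estimate rather than assumed.

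Clause (iii) is a transitivity statement: given $\bff_1\prec\bff_2$ and $\bff_2\prec\bff_3$, I would compose the subsequences. Topological embedding is transitive (as already noted after Definition~\ref{srcd1}), so it remains to produce, for each $\ee$, a single subsequence of $\bff_3$ witnessing (\ref{srce1}) against $\bff_1$; this comes from applying the metric condition for $\bff_1\prec\bff_2$ with parameter $\ee/2$ and for $\bff_2\prec\bff_3$ with parameter $\ee/2$, then chaining the two inequalities through the triangle inequality and recombining the two error terms (both of the summable form $\sum|a_n|2^{-(n+1)}$) into a single one of the same shape. Finally, clause (v) is essentially a restatement of Fact~\ref{srcf1} at the level of embeddings: since $h_n=f_n\circ e$ and $e$ is onto, the supremum norms are preserved, $\|\sum a_nh_n\|_\infty=\|\sum a_nf_n\|_\infty$, so every instance of (\ref{srce1}) witnessing $\bgg\prec\bff$ transfers verbatim to $\mathbf{h}$; the topological embedding transfers because $\overline{\{h_n\}}^p$ is homeomorphic to $\overline{\{f_n\}}^p$ (again by Fact~\ref{srcf1}, which gives $\phi_X(\bff)=\phi_Z(\mathbf{h})$ and the corresponding identification of the closures). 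The only delicate point across clauses (iii) and (v) is bookkeeping of the weight $2^{-(n+1)}$ under reindexing, but since the weights are summable and only a fixed factor is lost, this causes no genuine difficulty.
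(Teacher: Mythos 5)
Your proposal is correct and follows essentially the same route as the paper: (i) by definition, (ii) by using the basis constant and normalization to absorb the error term $\ee\sum|a_n|2^{-(n+1)}$ into a multiple of $\|\sum a_ng_n\|_\infty$, (iii) by chaining two instances of (\ref{srce1}) with parameter $\ee/2$, (iv) by converting each node of $T^d_{\bgg}$ into a node of $T^{2d}_{\bff}$ via a monotone map (the paper takes $\ee<\tfrac{1}{2d}$), and (v) via the isometric embedding of $C(X)$ into $C(Z)$ induced by $e$. The only detail worth making explicit is that in (iv) the nodes of $T^d_{\bgg}$ carry non-consecutive indices $(m_0<\dots<m_k)$, so (\ref{srce1}) must be applied with zero coefficients inserted at the skipped positions; this is harmless since the weights only decrease.
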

\begin{proof}
(i) It is straightforward.\\
(ii) Let $K\geq 1$ be the basis constant of $(g_n)$. We are going to
show that there exists $L=\{l_0<l_1<...\}\in[\nn]$ such that $(g_n)$ is
$2K$-equivalent to $(f_{l_n})$. Indeed, let $0<\ee<\frac{1}{4K}$
and select $L_\ee=\{l_0<l_1<...\}\in[\nn]$ such that inequality
(\ref{srce1}) is satisfied. Let $k\in\nn$ and $a_0,...,a_k\in\rr$.
Notice that
\begin{equation}
\label{srce2} \big\| \sum_{n=0}^k a_n g_n\big\|_\infty \leq \max_{0\leq i\leq k}
\big\| \sum_{n=0}^i a_n g_n\big\|_\infty \leq K \big\| \sum_{n=0}^k
a_n g_n\big\|_\infty.
\end{equation}
Moreover, for every $m\in\{0,...,k\}$ we have
\begin{equation}
\label{srce3} |a_m|\leq 2K \big\| \sum_{n=0}^k a_n g_n\big\|_\infty
\end{equation}
as $(g_n)$ is a normalized Schauder basic sequence (see \cite{LT}).
Plugging in inequalities (\ref{srce2}) and (\ref{srce3}) into (\ref{srce1})
we get
\begin{eqnarray*}
\big\| \sum_{n=0}^k a_n f_{l_n}\big\|_{\infty} & \leq & K \big\| \sum_{n=0}^k
a_n g_n\big\|_\infty + 2K\ee \big\| \sum_{n=0}^k a_n g_n\big\|_\infty \\
& \leq & 2K \big\| \sum_{n=0}^k a_n g_n\big\|_\infty
\end{eqnarray*}
by the choice of $\ee$. Arguing similarly, we see that
\[ \frac{1}{2K} \big\| \sum_{n=0}^k a_n g_n\big\|_\infty \leq
\big\| \sum_{n=0}^k a_n f_{l_n}\big\|_{\infty}. \]
Thus $(g_n)$ is $2K$-equivalent to $(f_{l_n})$, as desired. \\
(iii) It is a simple calculation, similar to that of part (ii), and
we prefer not to bother the reader with it. \\
(iv) Let $d\geq 1$. We fix $\ee>0$ with $\ee<\frac{1}{2d}$
and we select $L_\ee=\{l_0<l_1<...\}\in [\nn]$ such that inequality (\ref{srce1})
is satisfied. For every $s=(m_0<...<m_k)\in T^d_\bgg$ we set
$t_s=(l_{m_0}<...<l_{m_k})\in \FIN(\nn)$. Observe that for
every $k\in\nn$ and every $a_0,...,a_k\in\rr$ we have
\begin{eqnarray*}
2d\sum_{n=0}^k |a_n|\geq \big\| \sum_{n=0}^k a_n f_{l_{m_n}} \big\|_\infty
& \geq & \max_{0\leq i\leq k} \big\| \sum_{n=0}^i a_n g_{m_n} \big\|_\infty -
\ee \sum_{n=0}^k |a_n|\\
& \geq & \big\| \sum_{n=0}^k a_n g_{m_n} \big\|_\infty - \ee \sum_{n=0}^k |a_n|  \\
& \geq & \frac{1}{d} \sum_{n=0}^k |a_n| - \frac{1}{2d} \sum_{n=0}^k |a_n| \\
& = & \frac{1}{2d}\sum_{n=0}^k |a_n|.
\end{eqnarray*}
This yields that $t_s\in T^{2d}_\bff$. It follows that the map
$s\mapsto t_s$ is a monotone map from $T^d_{\bgg}$ to $T^{2d}_{\bff}$.
Hence $o(T^d_\bgg)\leq o(T^{2d}_\bff)$. As $d$ was arbitrary, this
implies that $\phi_Y(\bgg)\leq \phi_X(\bff)$, as desired. \\
(v) It is also straightforward, as the map $e$ induces an isometric
embedding of $C(X)$ into $C(Z)$.
\end{proof}
We are going to present another property of the notion of
strong embedding which has a Banach space theoretic flavor.
To this end, we give the following definition.
\begin{defn}
\label{d1} Let $E$ be a compact metrizable space and $\bgg=(g_n)$ be
a bounded sequence in $C(E)$. By $X_\bgg$ we shall denote the
completion of $c_{00}(\nn)$ under the norm
\begin{equation}
\| x\|_\bgg =\sup\Big\{ \big\| \sum_{n=0}^k x(n)g_n\big\|_\infty:
k\in\nn  \Big\}.
\end{equation}
\end{defn}
We shall denote by $(e_n^\bgg)$ the standard Hamel basis of
$c_{00}(\nn)$ regarded as a sequence in $X_\bgg$. Let us
isolate some elementary properties of $(e_n^\bgg)$.
\begin{enumerate}
\item[(P1)] The sequence $(e_n^\bgg)$ is a monotone basis
of $X_\bgg$. Moreover, $(e_n^\bgg)$ is normalized (respectively
seminormalized) if and only if $(g_n)$ is.
\item[(P2)] If $(g_n)$ is Schauder basic with
basis constant $K$, then $(e_n^\bgg)$ is $K$-equivalent
to $(g_n)$.
\end{enumerate}
Less trivial is the fact (which we will see in the next section)
that $\bgg\in\SRC(E)$ if and only if $(e^\bgg_n)$ is in $\SRC(K)$,
where $K$ is the closed unit ball of $X^*_{\bgg}$ with the
weak* topology. In light of property (P2) above, the
sequence $(e^\bgg_n)$ can be regarded as a sort of
``approximation" of $(g_n)$ by a Schauder basic sequence.

The following proposition relates the strong embedding
of a sequence $\bgg=(g_n)$ into a sequence $\bff=(f_n)_n$
with the existence of subsequences of $(f_n)$ which
are ``almost isometric" to $(e^\bgg_n)$. Its proof,
which is left to the interested reader, is based
on similar arguments as the proof of Proposition
\ref{srcp1}.
\begin{prop}
\label{propnew} Let $X$ and $Y$ be compact metrizable spaces,
$\bgg=(g_n)\in \SRC(X)$ and $\bff=(f_n)\in\SRC(Y)$.
If $\bgg$ strongly embeds into $\bff$, then for every
$\ee>0$ there exists $L_\ee=\{l_0<l_1<...\}\in[\nn]$
such that $(e^\bgg_n)$ is $(1+\ee)$-equivalent to
$(f_{l_n})$.
\end{prop}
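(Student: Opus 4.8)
The plan is to read the estimate straight off Definition~\ref{srcd2} and then upgrade its additive error to a multiplicative one. First I would observe that, for $x=\sum_{n=0}^{k}a_{n}e_{n}^{\bgg}$, Definition~\ref{d1} gives $\|x\|_{\bgg}=\max_{0\le i\le k}\|\sum_{n=0}^{i}a_{n}g_{n}\|_{\infty}$, so that the quantity inside the absolute value in inequality~(\ref{srce1}) is exactly $\|x\|_{\bgg}-\|\sum_{n=0}^{k}a_{n}f_{l_{n}}\|_{\infty}$. Thus, fixing a parameter $\delta>0$ and taking the set $L_{\delta}=\{l_{0}<l_{1}<\dots\}$ provided by the hypothesis $\bgg\prec\bff$ (only the metric clause~(\ref{srce1}) of $\prec$ is needed for the conclusion, not the topological one), I obtain for all $k$ and all scalars $a_{0},\dots,a_{k}$ the bound $\big|\,\|x\|_{\bgg}-\|\sum_{n=0}^{k}a_{n}f_{l_{n}}\|_{\infty}\,\big|\le\delta\sum_{n=0}^{k}\frac{|a_{n}|}{2^{n+1}}$. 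The goal is then to choose $\delta$ so small that the right-hand side is dominated by a prescribed small multiple of $\|x\|_{\bgg}$, which immediately yields $(e_{n}^{\bgg})\stackrel{1+\ee}{\sim}(f_{l_{n}})$ with $L_{\ee}:=L_{\delta}$.

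The decisive step is the passage from the weighted error to a multiplicative one, i.e.\ an inequality $\sum_{n=0}^{k}\frac{|a_{n}|}{2^{n+1}}\le C\|x\|_{\bgg}$ with $C$ independent of $k$ and of the scalars. Here I would use property (P1): since $(e_{n}^{\bgg})$ is a \emph{monotone} basis of $X_{\bgg}$, its basis projections have norm one, so for each $n\le k$ one has $|a_{n}|\,\|g_{n}\|_{\infty}=\|a_{n}e_{n}^{\bgg}\|_{\bgg}\le\|\sum_{m=0}^{n}a_{m}e_{m}^{\bgg}\|_{\bgg}+\|\sum_{m=0}^{n-1}a_{m}e_{m}^{\bgg}\|_{\bgg}\le 2\|x\|_{\bgg}$. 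Dividing by $2^{n+1}\|g_{n}\|_{\infty}$ and summing gives $\sum_{n=0}^{k}\frac{|a_{n}|}{2^{n+1}}\le\|x\|_{\bgg}\sum_{n}\frac{1}{2^{n}\|g_{n}\|_{\infty}}$, so one may take $C=\sum_{n}\frac{1}{2^{n}\|g_{n}\|_{\infty}}$. With such a $C$ at hand I would fix $\delta$ with $\delta C\le\ee/(1+\ee)$; the estimate of the first paragraph then becomes $(1-\delta C)\|x\|_{\bgg}\le\|\sum_{n=0}^{k}a_{n}f_{l_{n}}\|_{\infty}\le(1+\delta C)\|x\|_{\bgg}$, and the choice of $\delta$ upgrades this to $\frac{1}{1+\ee}\|x\|_{\bgg}\le\|\sum_{n=0}^{k}a_{n}f_{l_{n}}\|_{\infty}\le(1+\ee)\|x\|_{\bgg}$, which is precisely the asserted $(1+\ee)$-equivalence.

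The step I expect to be the main obstacle is exactly the finiteness of $C$. The series $\sum_{n}2^{-n}\|g_{n}\|_{\infty}^{-1}$ converges only when the norms $\|g_{n}\|_{\infty}$ stay bounded away from $0$, i.e.\ when $(g_{n})$ — equivalently, by (P1), $(e_{n}^{\bgg})$ — is seminormalized; this is the case in which the direct computation above, modelled on parts (ii) and (iv) of Proposition~\ref{srcp1}, goes through verbatim. When $\|g_{n}\|_{\infty}$ is allowed to decay faster than $2^{-n}$ a single uniform constant is no longer available, since $\sum_{n}\frac{|a_{n}|}{2^{n+1}}$ is then genuinely not dominated by $\|\cdot\|_{\bgg}$ on the unit sphere; the weighted error must be absorbed coordinate by coordinate rather than globally, balancing the budget $\delta\,2^{-(n+1)}$ against the $n$-th coordinate functional of the monotone basis. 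This is the one genuinely delicate point, and it is precisely where the exact form of the weights in Definition~\ref{srcd2} does the work; I would isolate it as the key lemma and, once the conversion is secured, carry out the remaining bookkeeping in the style of Proposition~\ref{srcp1}.
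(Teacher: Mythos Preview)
Your approach is exactly what the paper has in mind: the paper does not give a proof of Proposition~\ref{propnew} at all, merely recording that ``its proof, which is left to the interested reader, is based on similar arguments as the proof of Proposition~\ref{srcp1}.'' Your computation---rewriting the left-hand side of~(\ref{srce1}) as $\big|\,\|x\|_{\bgg}-\|\sum a_nf_{l_n}\|_\infty\,\big|$ via Definition~\ref{d1}, then converting the additive weighted error into a multiplicative one using the monotone-basis estimate $|a_n|\,\|g_n\|_\infty\le 2\|x\|_{\bgg}$---is precisely the analogue of parts (ii) and (iv) of Proposition~\ref{srcp1}, and it goes through cleanly when $(g_n)$ is seminormalized.

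The obstacle you flag, however, is not a technicality that can be repaired ``coordinate by coordinate''; it is a genuine limitation of the hypothesis. Test~(\ref{srce1}) on $a_0=\dots=a_{n-1}=0$, $a_n=1$, $k=n$: one obtains only $\big|\,\|g_n\|_\infty-\|f_{l_n}\|_\infty\,\big|\le\delta\,2^{-(n+1)}$, and if $\|g_n\|_\infty$ decays faster than $2^{-n}$ this gives no control on the ratio $\|f_{l_n}\|_\infty/\|g_n\|_\infty$, so already the one-term instance of $(1+\ee)$-equivalence can fail for every choice of $L_\delta$. No redistribution of the budget among coordinates can help, since the single-coordinate test uses only one term of the weighted sum. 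In other words, the proposition as stated appears to carry an implicit seminormalization hypothesis on $(g_n)$ (equivalently on $(e_n^{\bgg})$, by (P1)); under that hypothesis your argument is complete, and this seems to be the case the paper has in view. I would present the proof in the seminormalized setting and note explicitly that the general statement needs this assumption.
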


%-----------------------------------------------------------------%
%                        The main result                          %
%-----------------------------------------------------------------%

\section{The main result}

We are ready to state and prove the strong boundedness result for
the class $\SRC$.
\begin{thm}
\label{srct1} Let $A$ be an analytic subset of $\SRC$. Then there
exists $\bff\in\SRC$ such that for every $\bgg\in A$ we have
$\bgg\prec \bff$.
\end{thm}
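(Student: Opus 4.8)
The plan is to exploit the analytic-set machinery together with the $\PB^1_1$-rank $\phi$ and the fact (from Lemma \ref{srcl1}) that $\SRC$ is $\PB^1_1$-true via the $\ell_1$-tree reduction $\Phi(\bff)=T_\bff$. Since $A\subseteq\SRC$ is analytic and $\SRC$ is $\PB^1_1$, boundedness of the $\PB^1_1$-rank yields a countable ordinal $\xi$ with $\phi(\bgg)<\xi$ for every $\bgg\in A$. The strategy is \emph{not} merely to dominate the rank but to build a single $\bff\in\SRC$ into which \emph{every} $\bgg\in A$ strongly embeds in the sense of Definition \ref{srcd2}. By Proposition \ref{propnew}, for a fixed $\bgg$ it suffices to produce subsequences of the target whose spans are $(1+\ee)$-equivalent to $(e_n^\bgg)$ for every $\ee>0$; so the real goal is to manufacture a universal object whose finite-dimensional structure approximates, arbitrarily well and along suitable subsequences, every $X_\bgg$ with $\bgg\in A$.

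First I would pass to an analytic parametrization. Using that $A$ is analytic, I would fix a continuous surjection from a closed subset of $\nn^\nn$ (equivalently the body $[S]$ of a pruned tree $S$) onto $A$, so that each $\bgg\in A$ is coded by a branch and the finite-dimensional normes $\|\sum a_n g_n\|_\infty$ depend in a Borel/continuous way on finite initial data of the branch. Next, invoking the $\PB^1_1$-rank bound, I would work inside the well-founded tree $T_\bgg$ (equivalently the order $o(T_\bgg)$) uniformly: the key quantitative input is the computation in Proposition \ref{srcp1}(iv), which shows that strong embedding with parameter $\ee<\frac{1}{2d}$ sends $T^d_\bgg$ monotonically into $T^{2d}_\bff$. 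Reading this in reverse tells me what $\ell_1$-tree structure the target $\bff$ must carry in order to \emph{receive} all the $T^d_\bgg$, $\bgg\in A$, simultaneously.

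The construction itself I would carry out by amalgamation. I would build $\bff$ as a carefully organized concatenation/interleaving of approximating blocks, one family of blocks for each finite piece of branch data in $S$ and each rational tolerance $\ee=1/2^m$, arranged on a single compact metrizable space $E$ (realized, via Fact \ref{srcf1} and Proposition \ref{srcp1}(v), as an inverse-limit or disjoint-union type space that continuously surjects onto each relevant $\overline{\{g_n\}}^p$, using that every $C(X)$ embeds isometrically into $C(2^\nn)$). Property (P1)–(P2) and the space $X_\bgg$ of Definition \ref{d1} give the correct Schauder-basic ``skeleton'' $(e_n^\bgg)$ to target, and I would glue these skeletons so that along an appropriate $L_\ee\in[\nn]$ the inequality (\ref{srce1}) holds for each $\bgg$. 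Crucially I must verify that the amalgam stays \emph{inside} $\SRC$, i.e. that its $\ell_1$-tree $T_\bff$ remains well-founded; this is where the rank bound $\phi(\bgg)<\xi$ is essential, since it caps the order of every incoming $T^d_\bgg$ and lets me bound $o(T_\bff)$ by a countable ordinal, keeping $\bff\in\SRC$ by the Rosenthal-dichotomy equivalence $\bff\in\SRC \Leftrightarrow T_\bff\in\wf$.

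The main obstacle I anticipate is precisely this \emph{uniform} amalgamation: combining continuum-many (or, after parametrization, tree-indexed) approximate isometric copies into one sequence $\bff$ while (a) preserving well-foundedness of $T_\bff$ and (b) simultaneously delivering, for each $\bgg\in A$ and each $\ee>0$, a single subsequence $L_\ee$ satisfying (\ref{srce1}). Coherence of the rank with strong embedding, Proposition \ref{srcp1}(iii)–(iv), is what makes (a) tractable, but the delicate point is the \emph{selection} of the subsequences $L_\ee$ uniformly over the analytic set $A$ — I expect this to require a Borel uniformization / tree-surgery argument that assigns to each branch of $S$ the correct block addresses in $\bff$, and then a separate verification that the resulting pointwise closure lies in $\bb_1(E)$ rather than containing a copy of $\beta\nn$.
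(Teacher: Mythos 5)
Your proposal is a plan rather than a proof, and the one concrete mechanism you offer for the hardest step is not a valid argument. You propose to keep the amalgam $\bff$ inside $\SRC$ by using boundedness of the $\PB^1_1$-rank on the analytic set $A$: a countable $\xi$ with $\phi(\bgg)<\xi$ for all $\bgg\in A$ would ``cap the order of every incoming $T^d_\bgg$ and let me bound $o(T_\bff)$.'' This does not follow. A bound on the orders of the $\ell_1$-trees of the individual pieces gives no control on the $\ell_1$-tree of an interleaving of them on a common domain: linear combinations mixing functions coming from \emph{different} $\bgg$'s (or from different tolerance blocks of the same $\bgg$) can generate $\ell_1$-behaviour present in no single piece, so $T_\bff$ could acquire an infinite branch even though every $o(T_\bgg)$ is bounded by $\xi$. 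The paper never argues via ordinals here; indeed the rank bound is not used anywhere in the proof of the theorem. Instead, membership of $\bff$ in $\SRC$ is verified directly through Rosenthal's criterion: every subsequence of $(f_n)$ is shown to have a pointwise convergent subsequence, by a Ramsey dichotomy on the indexing tree (pairwise incomparable nodes force convergence to $0$; a chain forces the subsequence to track a branch, hence an element of $A$, which is in $\SRC$ by hypothesis). Making that dichotomy available is exactly what the construction is designed for, and it is the ingredient your outline lacks.

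Concretely, the two missing ideas are these. First, the analytic set is not parametrized abstractly by a closed subset of $\nn^\nn$; one codes $A$ by a pruned tree $T$ on $\nn\times\nn$ whose branches record, via a fixed dense sequence $(d_n)$ in the ball of $C(2^\nn)$ and a partition $(D_n)$ of $\nn$, successively finer approximations $d_{\sg(k)}$ of the coordinates $g_n$ of some $\bgg\in A$; thus every infinite chain of $T$ ``is'' an element of $A$ up to summable error. Second, the universal sequence is $f_n=\chi_{V_{\phi(t_n)}}\cdot h_{t_n}$ on $2^\nn\times 2^\nn$, where $\phi:T\to\ct$ is monotone and $h_t$ is the approximant attached to the node $t$. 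The first factor makes incomparable nodes asymptotically disjoint (giving the convergence to $0$ in the Ramsey argument) and, along a chain $t_{l_0}\sqsubset\cdots\sqsubset t_{l_k}$, produces the exact identity $\|\sum_{n=0}^k a_nf_{l_n}\|_\infty=\max_{0\le i\le k}\|\sum_{n=0}^i a_n h_{t_{l_n}}\|_\infty$, which is precisely why Definition \ref{srcd2} carries the $\max$ over initial segments and is what yields inequality (\ref{srce1}). Your appeal to Proposition \ref{propnew} also runs in the wrong direction (it is a consequence of strong embedding, not a sufficient condition for it, since the topological embedding must be produced as well). Without the tree coding and the product-space support trick, neither the selection of the sets $L_\ee$ nor the verification that $\overline{\{f_n\}}^p\subseteq\bb_1$ goes through, so the proposal as it stands does not establish the theorem.
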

We record the following consequence of Theorem \ref{srct1} and
Proposition \ref{propnew}.
\begin{cor}
\label{cornew} Let $X$ be a compact metrizable space and $\bgg=(g_n)\in\SRC(X)$.
Then $(e^\bgg_n)$ is in $\SRC(K)$, where $K$ is the closed unit ball
of $X^*_{\bgg}$ with the weak* topology.
\end{cor}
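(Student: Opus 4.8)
The plan is to derive the \emph{intrinsic} Rosenthal-compactness of the sequence $(e^\bgg_n)$ from the \emph{extrinsic} strong boundedness of the whole class, by feeding the single sequence $\bgg$ into Theorem \ref{srct1}. By Lemma \ref{srcl1} (that is, Rosenthal's Dichotomy) applied to the compact metrizable space $K$, and since $X_\bgg$ embeds isometrically into $C(K)$, the assertion $(e^\bgg_n)\in\SRC(K)$ is equivalent to the well-foundedness of the $\ell_1$-tree of $(e^\bgg_n)$; equivalently, to the fact that no subsequence of $(e^\bgg_n)$ is equivalent to the standard basis of $\ell_1$. This is the only nontrivial point: the norm $\|\cdot\|_\bgg$ is built from all partial sums, so a priori the $\ell_1$-tree of $(e^\bgg_n)$ could be much larger than that of $(g_n)$, and one cannot read off the conclusion from $\bgg\in\SRC(X)$ alone.

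First I would arrange that $\bgg\in\SRC$. Fixing a continuous surjection $e\colon 2^\nn\to X$ and putting $\tilde{\bgg}=(g_n\circ e)$, Fact \ref{srcf1} gives $\tilde{\bgg}\in\SRC$; moreover $\|\sum_n x(n)(g_n\circ e)\|_\infty=\|\sum_n x(n)g_n\|_\infty$ for every finitely supported $x$, so $\|\cdot\|_{\tilde{\bgg}}=\|\cdot\|_\bgg$ and hence $X_{\tilde{\bgg}}=X_\bgg$ and $(e^{\tilde{\bgg}}_n)=(e^\bgg_n)$. We may therefore assume outright that $X=2^\nn$ and $\bgg\in\SRC$.

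Next I would apply Theorem \ref{srct1} to the analytic (indeed closed) singleton $A=\{\bgg\}\subseteq\SRC$, obtaining $\bff=(f_n)\in\SRC$ with $\bgg\prec\bff$. Proposition \ref{propnew} then converts this strong embedding into an honest equivalence with a subsequence of $\bff$: taking, say, $\ee=1$, there is $L=\{l_0<l_1<\dots\}\in[\nn]$ with $(e^\bgg_n)\stackrel{2}{\sim}(f_{l_n})$.

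Finally I would transfer well-foundedness across this equivalence. Since $\bff\in\SRC$, Lemma \ref{srcl1} yields that $T_\bff$ is well-founded, so no subsequence of $(f_n)$ is equivalent to the standard basis of $\ell_1$. Were some subsequence $(e^\bgg_{m_i})_i$ to be $C$-equivalent to that basis, restricting the $2$-equivalence to the indices $(m_i)$ would force $(f_{l_{m_i}})_i$ to be $2C$-equivalent to it, a contradiction. Thus $(e^\bgg_n)$ has no $\ell_1$-subsequence, so its $\ell_1$-tree is well-founded, and Lemma \ref{srcl1} applied to $K$ gives $(e^\bgg_n)\in\SRC(K)$. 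The only bookkeeping is to confirm that $(e^\bgg_n)$ is a legitimate code in $\mathbf{B}(K)$ — the images of the $e^\bgg_n$ under $X_\bgg\hookrightarrow C(K)$ are pairwise distinct and have supremum norm $\|e^\bgg_n\|_\bgg=\|g_n\|_\infty\le 1$ — and that the $\ell_1$-tree computed in $C(K)$ coincides with the abstract one in $X_\bgg$; both are immediate since the embedding is isometric. I expect the genuinely delicate idea to be the first one: invoking strong boundedness for a \emph{single} sequence precisely in order to manufacture the approximating basic sequence $(f_{l_n})$ that carries the well-foundedness back to $(e^\bgg_n)$.
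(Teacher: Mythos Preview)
Your proposal is correct and follows exactly the route the paper intends: the corollary is stated as an immediate consequence of Theorem~\ref{srct1} and Proposition~\ref{propnew}, and you have spelled out precisely that deduction (apply Theorem~\ref{srct1} to the singleton $\{\bgg\}$, then Proposition~\ref{propnew} to get a subsequence of $\bff$ equivalent to $(e^\bgg_n)$, then transfer the absence of $\ell_1$-subsequences). The only extra care you take---the reduction to $X=2^\nn$ via Fact~\ref{srcf1} and the bookkeeping that $(e^\bgg_n)$ is a legitimate element of $\mathbf{B}(K)$---is routine and consistent with the paper's conventions.
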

We proceed to the proof of Theorem \ref{srct1}.
\begin{proof}[Proof of Theorem \ref{srct1}]
We fix a norm dense sequence $(d_n)$ in the closed unit
ball of $C(2^\nn)$ such that $d_n\neq d_m$ if $n\neq m$ and
$d_n\neq 0$ for every $n\in\nn$. We also fix a sequence $(D_n)$ of
infinite subsets of $\nn$ such that $D_n\cap D_m=\varnothing$
if $n\neq m$ and $\nn=\bigcup_n D_n$. Let $A$ be an analytic
subset of $\SRC$ and define $\tilde{A}\subseteq
\nn^\nn$ by
\begin{eqnarray*}
\sg\in\tilde{A} & \Leftrightarrow & \exists \bsg\in A \ \exists \ee>0
\text{ such that}\\
& & \big[ \forall n \ \forall k \ \big(k\in D_n \Rightarrow
\| g_n-d_{\sg(k)}\|_\infty \leq \frac{\ee}{2^{k+1}}\big) \big] \text{ and }\\
& & \big[\forall n \ \forall m \ \big( n\neq m\Rightarrow \sg(n)\neq \sg(m)
\big)\big].
\end{eqnarray*}
Then $\tilde{A}$ is $\SB^1_1$. Let $T$ be the unique downwards
closed, pruned tree on $\nn\times\nn$ such that $\tilde{A}=
\mathrm{proj}[T]$. We define a sequence $(h_t)_{t\in T}$
in $C(2^\nn)$ as follows. If $t=(\varnothing,\varnothing)$,
then we set $h_t=0$. If $t\in T$ with $t\neq (\varnothing,\varnothing)$,
then $t=(s,w)$ with $s=(n_0,...,n_m) \in\bt$. We set $h_t=d_{n_m}$.
Clearly $\|h_t\|_\infty\leq 1$ for every $t\in T$. We notice the
following properties of the sequence $(h_t)_{t\in T}$.
\begin{enumerate}
\item[(P1)] For every $\sg\in [T]$ there exists $\bsg\in A$ and
$\ee>0$ such that for every $n\in\nn$ and every $k\geq 1$ with $k-1\in D_n$
we have $\|g_n-h_{\sg|k}\|_\infty\leq\frac{\ee}{2^{k}}$.
\item[(P2)] For every $\bsg\in A$ and every $\ee>0$ there exists
$\sg\in [T]$ such that for every $n\in\nn$ and every $k\geq 1$ with $k-1\in D_n$
we have $\|g_n-h_{\sg|k}\|_\infty\leq\frac{\ee}{2^{k}}$.
\end{enumerate}
We pick an embedding $\phi:T\to\ct$ such that for all $t, t'\in T$
we have $\phi(t)\sqsubset \phi(t')$ if and only if $t\sqsubset t'$.
Let also $e:T\to\nn$ be a bijection such that $e(t)< e(t')$ if
$t\sqsubset t'$ for all $t, t'\in T$. We enumerate the nodes of $T$
as $(t_n)$ according to $e$. Now for every $n\in\nn$ we define
$f_n:2^\nn\times 2^\nn\to\rr$ by
\begin{equation}
f_n(\sg_1,\sg_2)= \chi_{V_{\phi(t_n)}}(\sg_1) \cdot h_{t_n}(\sg_2)
\end{equation}
where $V_{\phi(t_n)}=\{\sg\in 2^\nn: \phi(t_n)\sqsubset \sg\}$. Clearly
$f_n\in C(2^\nn\times2^\nn)$ and $\|f_n\|_\infty\leq 1$ for all $n\in\nn$.
Moreover, it is easy to check that $f_n\neq f_m$ if $n\neq m$.

It will be convenient to adopt the following notation. For every
function $g:2^\nn\to\rr$ and every $\tau\in 2^\nn$ by
$g*\tau:2^\nn\times 2^\nn\to\rr$ we shall denote the function defined by
$g*\tau(\sg_1,\sg_2)=\delta_\tau(\sg_1)\cdot g(\sg_2)$ for every
$(\sg_1,\sg_2)\in 2^\nn\times 2^\nn$ ($\delta_\tau$ stands for
the Dirac function at $\tau$).
\begin{claim}
\label{cl1} We have $(f_n)\in\SRC(2^\nn\times 2^\nn)$.
\end{claim}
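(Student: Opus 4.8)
The plan is to check the definition of $\SRC(2^\nn\times2^\nn)$ directly. We already know $\|f_n\|_\infty\le 1$ and $f_n\neq f_m$ for $n\neq m$, so the only thing left is that $\overline{\{f_n\}}^p\subseteq\bb_1(2^\nn\times2^\nn)$. Exactly as in the proof of Lemma \ref{srcl1}, by Rosenthal's Dichotomy and the Main Theorem of \cite{Ro2} it is enough to show that every subsequence of $(f_n)$ has a further pointwise convergent subsequence (equivalently, that $(f_n)$ has no subsequence equivalent to the basis of $\ell_1$).

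First I would record the combinatorial geometry built into the definition of $(f_n)$. Since $\phi$ satisfies $\phi(t)\sqsubset\phi(t')$ if and only if $t\sqsubset t'$, the clopen sets $V_{\phi(t)}$ are nested, $V_{\phi(t')}\subseteq V_{\phi(t)}$ when $t\sqsubseteq t'$, and disjoint, $V_{\phi(t)}\cap V_{\phi(t')}=\varnothing$, when $t$ and $t'$ are incomparable. Now fix a subsequence $(f_{n_i})$ and color each pair of the associated nodes $\{t_{n_i}\}$ according to whether its members are comparable or incomparable. By Ramsey's theorem I may pass to an infinite subset along which the nodes form either an antichain or a chain. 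In the antichain case the supports $V_{\phi(t_{n_i})}$ are pairwise disjoint, so at every point $(\sg_1,\sg_2)$ at most one of the numbers $f_{n_i}(\sg_1,\sg_2)=\chi_{V_{\phi(t_{n_i})}}(\sg_1)\,h_{t_{n_i}}(\sg_2)$ is non-zero; hence $f_{n_i}\to 0$ pointwise and this subsequence converges.

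The chain case is the heart of the matter. Using that $e(t)<e(t')$ whenever $t\sqsubset t'$, the nodes are automatically ordered as $t_{n_0}\sqsubset t_{n_1}\sqsubset\cdots$, so they lie along a single branch $\sg\in[T]$ with $t_{n_i}=\sg|k_i$ for strictly increasing lengths $k_i$; correspondingly the supports $V_{\phi(t_{n_i})}$ strictly decrease to $\bigcap_i V_{\phi(t_{n_i})}=\{\sg^*\}$, where $\sg^*=\bigcup_i\phi(t_{n_i})\in2^\nn$. Off the slice $\{\sg^*\}\times2^\nn$ every term eventually vanishes, so only the values $f_{n_i}(\sg^*,\sg_2)=h_{\sg|k_i}(\sg_2)$ require attention. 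By property (P1) there are $\bgg\in A$ and $\ee>0$ such that $\|g_{m(i)}-h_{\sg|k_i}\|_\infty\le\ee/2^{k_i}$, where $m(i)$ is the unique index with $k_i-1\in D_{m(i)}$. Since $\bgg\in\SRC$, the sequence $(g_{m(i)})_i$, whose terms all lie in $\{g_n:n\in\nn\}$, has a pointwise convergent subsequence: either some value recurs infinitely often, or, after passing to distinct indices, Rosenthal's Dichotomy applies, because an $\ell_1$-subsequence of $(g_{m(i)})$ would produce an $\ell_1$-subsequence of $\bgg$, contradicting $\bgg\in\SRC$. Along this subsequence $h_{\sg|k_i}$ converges pointwise to the common limit $G\in\overline{\{g_n\}}^p$ (the perturbations $\ee/2^{k_i}$ tend uniformly to $0$), and therefore $(f_{n_i})$ converges pointwise to $G*\sg^*$.

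Combining the two cases, every subsequence of $(f_n)$ has a pointwise convergent subsequence, which is what the reduction required. I expect the chain case to be the only real obstacle, and within it the decisive point is transferring the sequential compactness available from $\bgg\in\SRC$ to the perturbed sequence $(h_{\sg|k_i})$ via (P1); the antichain case and the bookkeeping with the embeddings $\phi$ and $e$ are routine.
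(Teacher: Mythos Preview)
Your argument is correct and follows essentially the same route as the paper: reduce via Rosenthal's theorem to showing every subsequence has a further pointwise convergent subsequence, apply Ramsey to split into the antichain case (where $(f_n)$ converges to $0$) and the chain case (where property (P1) together with $\bgg\in\SRC$ yields convergence of $(h_{t_{l_n}})$, hence of $(f_{l_n})$, to a function of the form $g*\tau$). Your extra care with possible repetitions among the indices $m(i)$ and with the geometry of the sets $V_{\phi(t_{n_i})}$ only makes explicit what the paper leaves implicit.
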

\noindent \textit{Proof of Claim \ref{cl1}.} By the Main Theorem
in \cite{Ro2}, it is enough to show that every subsequence
of $(f_n)$ has a further pointwise convergent subsequence.
So, let $N\in[\nn]$ arbitrary. By Ramsey's theorem, there exists
$M\in[N]$ such that the family $\{\phi(t_n): n\in M\}$ either
consists of pairwise incomparable nodes, or of pairwise comparable.
In the first case we see that the sequence $(f_n)_{n\in M}$ is
pointwise convergent to 0. In the second case we notice that,
by the properties of $\phi$ and the enumeration of $T$,
for every $n,m\in M$ with $n<m$ we have $t_n\sqsubset t_m$. It follows
that there exists $\sg\in [T]$ such that $t_n\sqsubset\sg$ for every
$n\in M$. We may also assume that $t_n\neq (\varnothing, \varnothing)$
for all $n\in M$. By property (P1) above, there exist $\bsg\in A$, $\ee>0$
and a sequence $(k_n)_{n\in M}$ in $\nn$ (with possible repetitions)
such that $\|g_{k_n}-h_{t_n}\|_\infty\leq \frac{\ee}{2^{|t_n|}}$ for all
$n\in M$. As $\bgg\in \SRC$, there exists $L\in [M]$ such that the sequence
$(g_{k_n})_{n\in L}$ is pointwise convergent to a Baire-1 function $g$.
By the fact that $\lim_{n\in L} |t_n|=\infty$, we get that the sequence
$(h_{t_n})_{n\in L}$ is also pointwise convergent to $g$. Finally
notice that the sequence $(\chi_{V_{\phi(t_n)}})_{n\in L}$
converges pointwise to $\delta_{\tau}$, where $\tau$ is the unique
element of $2^\nn$ determined by the infinite chain $\{\phi(t_n):n\in L\}$
of $\ct$. It follows that the sequence $(f_n)_{n\in L}$ is pointwise
convergent to the function $g*\tau$. The claim is proved. \hfill $\lozenge$
\begin{claim}
\label{cl2} For every $\bsg\in A$, $\bgg$ topologically embeds into $(f_n)$.
\end{claim}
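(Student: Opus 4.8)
The plan is to produce, for each fixed $\bgg\in A$, an explicit homeomorphic embedding of $\overline{\{g_n\}}^p$ into $\overline{\{f_n\}}^p$. First I would apply property (P2) with, say, $\ee=1$ to obtain a branch $\sg\in[T]$ satisfying $\|g_n-h_{\sg|k}\|_\infty\leq 2^{-k}$ whenever $k\geq 1$ and $k-1\in D_n$. Since $\phi$ is an embedding, the images $\phi(\sg|0)\sqsubset\phi(\sg|1)\sqsubset\cdots$ form a strictly increasing chain in $\ct$, so their lengths tend to infinity and they determine a unique point $\tau\in 2^\nn$ with $\phi(\sg|k)\sqsubset\tau$ for every $k$. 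This $\tau$ will serve as the target fiber of the embedding.

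Next I would consider the map $\Psi\colon\rr^{2^\nn}\to\rr^{2^\nn\times 2^\nn}$ given by $\Psi(g)=g*\tau$. It is continuous for the pointwise topologies, because the value of $g*\tau$ at a coordinate $(\sg_1,\sg_2)$ depends on $g$ only through $g(\sg_2)$; and it is injective on all of $\rr^{2^\nn}$, since restricting $g*\tau$ to the fiber $\{\tau\}\times 2^\nn$ recovers $g$. Consequently $\Psi$ restricted to the compact set $\overline{\{g_n\}}^p$ is automatically a homeomorphism onto its image, and it only remains to verify that this image is contained in $\overline{\{f_n\}}^p$.

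The heart of the matter is to show that $g_n*\tau\in\overline{\{f_n\}}^p$ for each $n$. Fix $n$ and let $k$ range over the infinite set $\{k\geq 1: k-1\in D_n\}$; put $u_k=\sg|k\in T$ and let $m_k$ be the unique index with $t_{m_k}=u_k$. Then $f_{m_k}(\sg_1,\sg_2)=\chi_{V_{\phi(u_k)}}(\sg_1)\,h_{u_k}(\sg_2)$, and as $k\to\infty$ two things happen simultaneously: $h_{u_k}\to g_n$ uniformly, by the choice of $\sg$ and since $2^{-k}\to 0$; while $\chi_{V_{\phi(u_k)}}\to\delta_\tau$ pointwise, because the clopen sets $V_{\phi(u_k)}$ shrink to $\{\tau\}$ as $|\phi(u_k)|\to\infty$. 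A short case check at each point $(\sg_1,\sg_2)$—treating $\sg_1=\tau$ and $\sg_1\neq\tau$ separately and using $\|h_{u_k}\|_\infty\leq 1$ in the second case—then shows that $f_{m_k}\to g_n*\tau$ pointwise, so $g_n*\tau$ is a pointwise limit of a subsequence of $(f_n)$ and hence belongs to $\overline{\{f_n\}}^p$.

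Finally, continuity of $\Psi$ yields $\Psi\big(\overline{\{g_n\}}^p\big)\subseteq\overline{\{g_n*\tau:n\in\nn\}}^p\subseteq\overline{\{f_n\}}^p$, the second inclusion holding because each $g_n*\tau$ lies in the closed set $\overline{\{f_n\}}^p$. Thus $\Psi$ restricts to the desired homeomorphic embedding of $\overline{\{g_n\}}^p$ into $\overline{\{f_n\}}^p$, establishing $\bgg<(f_n)$. I expect the pointwise-limit computation of the preceding paragraph to be the only real obstacle: the continuity and injectivity of $\Psi$ and the compact-to-Hausdorff principle are soft, whereas that step is precisely where the design of the construction—the norm approximation from (P2), the nesting of the clopen sets $V_{\phi(\cdot)}$, and the enumeration $(t_n)$ of $T$—must be brought together.
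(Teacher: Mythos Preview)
Your proof is correct and follows essentially the same route as the paper's: you invoke (P2) with $\ee=1$ to obtain a branch $\sg\in[T]$, extract the point $\tau\in 2^\nn$ from the chain $\big(\phi(\sg|k)\big)_k$, and verify that the map $g\mapsto g*\tau$ is a homeomorphic embedding of $\overline{\{g_n\}}^p$ into $\overline{\{f_n\}}^p$. The only difference is cosmetic: where the paper abbreviates the pointwise-convergence step by pointing back to Claim~\ref{cl1}, you spell it out directly and make the continuity/injectivity of $\Psi$ together with the compact--Hausdorff principle explicit.
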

\noindent \textit{Proof of Claim \ref{cl2}.} Let $\bsg\in A$.
By property (P2) above, there exists $\sg\in [T]$
such that for every $n\in\nn$ and every $k\geq 1$ with $k-1\in D_n$
we have $\|g_n-h_{\sg|k}\|_\infty\leq\frac{1}{2^{k}}$.
By the choice of $\phi$, we see that there exists a unique
$\tau\in 2^\nn$ such that $\phi(\sg|k) \sqsubset \tau$ for all
$k\in\nn$. Fix $n_0\in\nn$. By the fact that there exist infinitely
many $k$ with $\|g_{n_0}-h_{\sg|k}\|_\infty\leq\frac{1}{2^{k}}$,
arguing as in Claim \ref{cl1} we get that the function
$g_{n_0}*\tau$ belongs to the closure of $\{f_n\}$
in $\rr^{2^\nn \times 2^\nn}$. It follows that the map
\[ \overline{\{g_n\}}^p \ni g \mapsto g*\tau\in
\overline{\{f_n\}}^p\]
is a homeomorphic embedding and the claim is proved.
\hfill $\lozenge$
\begin{claim}
\label{cl3} For every $\bsg\in A$, $\bgg$ strongly embeds into
$(f_n)$.
\end{claim}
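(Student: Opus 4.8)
The plan is to verify the metric condition of Definition \ref{srcd2}, since the topological part of strong embedding was already established in Claim \ref{cl2}. So fix $\bgg\in A$ and $\ee>0$. First I would invoke property (P2) to produce a branch $\sg\in[T]$ with $\|g_n-h_{\sg|k}\|_\infty\leq \ee/2^{k}$ whenever $k\geq 1$ and $k-1\in D_n$. Since the $D_n$ are infinite and disjoint, each $n$ admits infinitely many such $k$, so I may select for every $n$ one value $k_n$ with $k_n-1\in D_n$, arranged so that $k_n\geq n+1$ and $k_0<k_1<\cdots$. Setting $l_n=e(\sg|k_n)$, the monotonicity of $e$ along $\sqsubset$-chains forces $l_0<l_1<\cdots$, so $L_\ee=\{l_n\}\in[\nn]$ is a legitimate candidate. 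By the very definition of the $f_n$ we then have $f_{l_n}(\sg_1,\sg_2)=\chi_{V_{\phi(\sg|k_n)}}(\sg_1)\,h_{\sg|k_n}(\sg_2)$, together with the approximation $\|g_n-h_{\sg|k_n}\|_\infty\leq \ee/2^{k_n}\leq \ee/2^{n+1}$.

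The heart of the argument is an exact evaluation of the norm $\big\|\sum_{n=0}^k a_n f_{l_n}\big\|_\infty$. The nodes $\sg|k_n$ form a chain, so their $\phi$-images are $\sqsubset$-increasing in $\ct$, whence the clopen sets $W_n:=V_{\phi(\sg|k_n)}$ are nested, $W_0\supseteq W_1\supseteq\cdots\supseteq W_k$, all containing the point $\tau$ determined by this chain. Consequently, for a fixed first coordinate $\sg_1$ the set $\{n\leq k:\sg_1\in W_n\}$ is exactly an initial segment $\{0,\dots,j\}$, and the slice satisfies $\sum_{n=0}^k a_n f_{l_n}(\sg_1,\cdot)=\sum_{n=0}^j a_n h_{\sg|k_n}$. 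Since $\phi$ is a strict $\sqsubset$-embedding, the inclusions $W_{j+1}\subsetneq W_j$ are strict, so every length $0\leq j\leq k$ is realised by some $\sg_1\in W_j\setminus W_{j+1}$; taking the supremum over both coordinates therefore yields
\[ \big\|\sum_{n=0}^k a_n f_{l_n}\big\|_\infty=\max_{0\leq j\leq k}\big\|\sum_{n=0}^j a_n h_{\sg|k_n}\big\|_\infty. \]
This identity is the precise mechanism by which the product construction manufactures the ``maximum over initial segments'' appearing in (\ref{srce1}).

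With the identity in hand the remainder is a perturbation estimate. For each $j\leq k$ the triangle inequality gives
\[ \Big|\big\|\sum_{n=0}^j a_n g_n\big\|_\infty-\big\|\sum_{n=0}^j a_n h_{\sg|k_n}\big\|_\infty\Big|\leq\sum_{n=0}^j|a_n|\,\|g_n-h_{\sg|k_n}\|_\infty\leq\ee\sum_{n=0}^k\frac{|a_n|}{2^{n+1}}, \]
a bound uniform in $j$. Combining this with the elementary inequality $|\max_j A_j-\max_j B_j|\leq\max_j|A_j-B_j|$ and the norm identity above produces exactly inequality (\ref{srce1}), so that $\bgg\prec(f_n)$.

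I expect the only genuine obstacle to be the norm identity of the second paragraph; everything else is bookkeeping. The subtle points there are that the nested clopen sets must let every partial-sum length $j$ appear as an honest slice (which is guaranteed by the strict inclusions $W_{j+1}\subsetneq W_j$, coming from $\phi$ being a strict $\sqsubset$-embedding into $\ct$), and that no configuration of first coordinates can overshoot the stated maximum (which is automatic, as each $\sg_1$ selects some initial segment). Once the identity is secured, matching the target error term $\ee\sum_n |a_n|/2^{n+1}$ needs only the mild arithmetic arrangement $k_n\geq n+1$ already built into the choice of $(k_n)$.
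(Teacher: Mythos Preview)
Your proposal is correct and follows essentially the same approach as the paper: invoke (P2) to get a branch $\sg$, select an increasing sequence $k_n$ with $k_n-1\in D_n$ (the paper calls these $m_n$) so that $\|g_n-h_{\sg|k_n}\|_\infty\leq\ee/2^{n+1}$, then prove the key norm identity $\|\sum a_n f_{l_n}\|_\infty=\max_j\|\sum_{n\leq j}a_n h_{\sg|k_n}\|_\infty$ via the nested clopen sets and finish with the triangle inequality plus the elementary $|\max A_j-\max B_j|\leq\max|A_j-B_j|$. The only cosmetic differences are notational (you write $l_n=e(\sg|k_n)$ directly, the paper phrases it as $t_{l_n}=\sg|m_n$) and that the paper verifies the two inequalities of the norm identity by separately picking extremizers, whereas you phrase it structurally in terms of slices; the content is identical.
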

\noindent \textit{Proof of Claim \ref{cl3}.} Fix $\bsg\in A$.
By Claim \ref{cl2}, it is enough to show that for every
$\ee>0$ there exists $L_\ee=\{l_0<l_1<...\}\in [\nn]$
such that inequality (\ref{srce1}) is satisfied for
$(g_n)$ and $(f_{l_n})$. So, let $\ee>0$ arbitrary. Invoking
property (P2) above, we see that there exist $\sg\in [T]$
such that for every $n\in\nn$ and every $k\geq 1$ with $k-1\in D_n$
we have $\|g_n-h_{\sg|k}\|_\infty\leq \frac{\ee}{2^{k}}$.
There exists $D=\{m_0<m_1<...\}\in [\nn]$ with $m_0\geq 1$ and
such that $m_n-1\in D_n$ for every $n\in\nn$. By the properties
of the enumeration $e$ of $T$, there exists $L=\{l_0<l_1<...\}\in
[\nn]$ such that $t_{l_n}=\sg|m_n$ for every $n\in\nn$.
We isolate, for future use, the following facts.
\begin{enumerate}
\item[(F1)] For every $n\in\nn$ we have $\|g_n-h_{t_{l_n}}
\|_\infty\leq \frac{\ee}{2^{m_n}}\leq \frac{\ee}{2^{n+1}}$.
\item[(F2)] For every $n,m\in \nn$ with $n<m$ we have
$t_{l_n}\sqsubset t_{l_m}$.
\end{enumerate}
We claim that the sequences $(g_n)$ and $(f_{l_n})$
satisfy inequality (\ref{srce1}) for the given $\ee>0$.
Indeed, let $k\in \nn$ and $a_0,...,a_k\in\rr$. By (F1)
above, for every $i\in\{0,...,k\}$ we have
\[ \Big| \ \big\| \sum_{n=0}^i a_n g_n\big\|_{\infty} - \big\|
\sum_{n=0}^i a_n h_{t_{l_n}} \big\|_{\infty} \Big| \leq \ee
\sum_{n=0}^i \frac{|a_n|}{2^{n+1}}. \]
This implies that
\[ \Big| \max_{0\leq i\leq k} \big\| \sum_{n=0}^i a_n g_n
\big\|_{\infty} - \max_{0\leq i\leq k} \big\| \sum_{n=0}^i
a_n h_{t_{l_n}} \big\|_{\infty} \Big| \leq \ee \sum_{n=0}^k
\frac{|a_n|}{2^{n+1}}. \]
The above inequality is a consequence of the following
elementary fact. If $(r_i)_{i=0}^k$, $(\theta_i)_{i=0}^k$
and $(\delta_i)_{i=0}^k$ are finite sequences of positive
reals such that $|r_i-\theta_i|\leq \delta_i$ for all
$i\in\{0,...,k\}$, then
\[ \big| \max_{0\leq i\leq k} r_i - \max_{0\leq i\leq k}
\theta_i \big| \leq \max_{0\leq i \leq k} \delta_i. \]
So the claim will be proved once we show that
\[ \max_{0\leq i\leq k} \big\| \sum_{n=0}^i a_n h_{t_{l_n}}
\big\|_{\infty} = \big\| \sum_{n=0}^k a_n f_{l_n} \big\|_\infty. \]
To this end we argue as follows.
For every $t\in T$ the function $h_t$ is continuous.
So there exist $j\in\{0,...,k\}$ and $\sg_2\in 2^\nn$ such that
\[ \max_{0\leq i\leq k} \big\| \sum_{n=0}^i a_n h_{t_{l_n}}
\big\|_{\infty}= \big| \sum_{n=0}^j a_n h_{t_{l_n}}(\sg_2) \big|. \]
By (F2), we have $t_{l_0}\sqsubset ...\sqsubset t_{l_k}$.
Hence, by the properties of $\phi$, we see that $\phi(t_{l_0})
\sqsubset ...\sqsubset \phi(t_{l_k})$. It follows that there
exists $\sg_1\in 2^\nn$ such that $\chi_{V_{\phi(t_{l_n})}}(\sg_1)=1$
if $n\in \{0,...,j\}$ while $\chi_{V_{\phi(t_{l_n})}}(\sg_1)=0$
otherwise. So
\[ \big\| \sum_{n=0}^k a_n f_{l_n} \big\|_\infty \geq
\big| \sum_{n=0}^k a_n f_{l_n}(\sg_1,\sg_2) \big| =
\big| \sum_{n=0}^j a_n h_{t_{l_n}}(\sg_2) \big|. \]
Conversely, let $(\sg_3,\sg_4)\in 2^\nn\times 2^\nn$ be such that
\[ \big\| \sum_{n=0}^k a_n f_{l_n} \big\|_\infty =
\big| \sum_{n=0}^k a_n f_{l_n}(\sg_3,\sg_4) \big|. \]
We notice that if $\chi_{V_{\phi(t_{l_n})}}(\sg_3)=1$ for
some $n\in\nn$, then for every $m\in\nn$ with $m\leq n$
we also have that $\chi_{V_{\phi(t_{l_m})}}(\sg_3)=1$.
Hence, there exists $p\in\{0,...,k\}$ such that
$\chi_{V_{\phi(t_{l_n})}}(\sg_3)=1$ if $n\in \{0,...,p\}$
while $\chi_{V_{\phi(t_{l_n})}}(\sg_3)=0$ otherwise.
This implies that
\begin{eqnarray*}
\big\| \sum_{n=0}^k a_n f_{l_n} \big\|_\infty  & = &
\big| \sum_{n=0}^k a_n f_{l_n}(\sg_3,\sg_4) \big|
= \big| \sum_{n=0}^p a_n f_{l_n}(\sg_3,\sg_4)\big| \\
& = & \big| \sum_{n=0}^p a_n h_{t_{l_n}}(\sg_4)\big|
\leq \max_{0\leq i\leq k} \big\| \sum_{n=0}^i a_n
h_{t_{l_n}} \big\|_\infty
\end{eqnarray*}
and the claim is proved.  \hfill $\lozenge$
\medskip

\noindent As $2^\nn\times 2^\nn$ is homeomorphic to $2^\nn$, by
Claims \ref{cl1} and \ref{cl3} and invoking Proposition \ref{srcp1}(v),
the proof of the theorem is completed.
\end{proof}

%---------------------------------------------------------------%
%                        Bibliography                           %
%---------------------------------------------------------------%


\begin{thebibliography}{99}

\bibitem[AD]{AD} S. A. Argyros and P. Dodos, \textit{Genericity and
amalgamation of classes of Banach spaces}, Adv. Math., 209 (2007), 666-748.

\bibitem[ADK]{ADK} S. A. Argyros, P. Dodos and V. Kanellopoulos,
\textit{A classification of separable Rosenthal compacta and
its applications}, Dissertationes Math., 449 (2008), 1-52.

\bibitem[AT]{AT} S. A. Argyros and S. Todor\v{c}evi\'{c}, \textit{Ramsey
Methods in Analysis}, Advanced Courses in Mathematics, CRM Barcelona,
Birkh\"{a}user, Verlag, Basel, 2005.

\bibitem[DF]{DF} P. Dodos and V. Ferenczi, \textit{Some strongly bounded
classes of Banach spaces}, Fund. Math., 193 (2007), 171-179.

\bibitem[Ke]{Kechris} A. S. Kechris, \textit{Classical Descriptive Set
Theory}, Grad. Texts in Math., 156, Springer-Verlag, 1995.

\bibitem[KW]{KW} A. S. Kechris and W. H. Woodin, \textit{A strong
boundedeness theorem for dilators}, Annals of Pure and Applied Logic,
52 (1991), 93-97.

\bibitem[LT]{LT} J. Lindenstrauss and L. Tzafriri, \textit{Classical
Banach spaces I and II}, Springer, 1996.

\bibitem[Ro1]{Ro1} H. P. Rosenthal, \textit{A characterization of Banach
spaces not containing $\ell_1$}, Proc. Nat. Acad. Sci. (USA), 71 (1974), 2411-2413.

\bibitem[Ro2]{Ro2} H. P. Rosenthal, \textit{Pointwise compact subsets
of the first Baire class}, Amer. J. Math., 99 (1977), 362-378.

\bibitem[To]{To2} S. Todor\v{c}evi\'{c}, \textit{Topics in Topology},
Lecture Notes in Math., 1652, Springer-Verlag, 1997.

\end{thebibliography}
\end{document}